\theoremstyle{plain}
\newtheorem{theorem}{Theorem}
\newtheorem{corollary}[theorem]{Corollary}
\newtheorem{proposition}[theorem]{Proposition}
\theoremstyle{definition}
\newtheorem{question}[theorem]{Question}
\theoremstyle{remark}
\title{\bf New upper bound for sums of dilates}
\author{
Albert Bush \qquad  Yi Zhao\thanks{Partially supported by NSF grant DMS-1400073.}\\
\small Department of Mathematics and Statistics\\[-0.8ex]
\small Georgia State University\\[-0.8ex]
\small Atlanta, GA 30303\\
\small\tt albertbush@gmail.com \qquad yzhao6@gsu.edu
}
\date{\dateline{Nov 2, 2016}{Aug 16, 2017}\\
\small Mathematics Subject Classifications: 11P70, 11B13, 05C70}
\begin{document}

\maketitle


\begin{abstract}
For $\lambda \in \mathbb{Z}$, let $\lambda \cdot A = \{ \lambda a : a \in A\}$.  
Suppose $r, h\in \mathbb{Z}$ are sufficiently large and comparable to each other. 
We prove that if $|A+A| \le K |A|$ and $\lambda_1, \ldots, \lambda_h \le 2^r$, then 
\[ |\lambda_1 \cdot A + \ldots + \lambda_h \cdot A | \le K^{ 7 rh /\ln (r+h) } |A|. \]
This improves upon a result of Bukh who shows that
\[ |\lambda_1 \cdot A + \ldots + \lambda_h \cdot A | \le K^{O(rh)} |A|. \]
Our main technique is to combine Bukh's idea of considering the binary expansion of $\lambda_i$ with a result on biclique decompositions of bipartite graphs.

  \bigskip\noindent \textbf{Keywords:} sumsets;
  dilates; Pl\"unnecke--Ruzsa inequality; graph decomposition; biclique partition
\end{abstract}

\section{Introduction}
Let $A$ and $B$ be nonempty subsets of an abelian group, and define the \emph{sumset} of $A$ and $B$ and the \emph{$h$-fold sumset} of $A$ as 
\begin{align*}A+B := \{ a + b: a \in A, b \in B\} \text{\hspace{3px} and \hspace{3px}} hA  := \{ a_1 + \ldots + a_h: a_i \in A\},
\end{align*}
respectively.  When the set $A$ is implicitly understood, we will reserve the letter $K$ to denote the doubling constant of $A$; that is, $K := |A+A|/|A|$.  A classical result of Pl\"unnecke bounds the cardinality of $hA$ in terms of $K$ and $|A|$.
\begin{theorem}[Pl\"unnecke's inequality \cite{plu70}]\label{plunnecke}
For any set $A$ and for any nonnegative integers $\ell$ and $m$, if $|A+A| = K |A|$, then
\[ |\ell A - m A| \le K^{\ell + m} |A|. \]
\end{theorem}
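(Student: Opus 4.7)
The plan is to follow Petridis's elegant proof of the Plünnecke--Ruzsa inequality, which replaces the graph-theoretic magnification-ratio machinery of Plünnecke's original argument with a minimization principle and the Ruzsa triangle inequality.

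First, I would select a nonempty subset $X \subseteq A$ that minimizes the ratio $K' := |X + A|/|X|$; since $A$ itself is a valid candidate, $K' \le K$. The heart of the argument is the following key lemma: for every finite set $C$,
\[
|X + A + C| \le K' \cdot |X + C|.
\]
I would prove this by induction on $|C|$. The base case $|C| = 1$ is the definition of $K'$. For the inductive step, write $C = C' \cup \{c\}$ with $c \notin C'$, and set $S = \{x \in X : x + c \in X + C'\}$. Inclusion--exclusion yields $|X + C| = |X + C'| + |X| - |S|$, and the observation $S + A + c \subseteq X + A + C'$ gives $|X + A + C| \le |X + A + C'| + |X + A| - |S + A|$. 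Combining the inductive hypothesis with $|X + A| = K'|X|$ and the lower bound $|S + A| \ge K'|S|$ (a consequence of the minimality of $K'$, since $S \subseteq A$) proves the lemma.

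Iterating the lemma with $C = jA$ for $j = 0, 1, \ldots, \ell - 1$ yields $|X + \ell A| \le K'^{\,\ell} |X| \le K^\ell |A|$. To convert this pure sumset bound into a bound for $|\ell A - mA|$, I would invoke the Ruzsa triangle inequality in its sumset form: for any finite sets $Y, V, W$,
\[
|Y| \cdot |V - W| \le |Y + V| \cdot |Y + W|.
\]
This follows from an explicit injection $(V - W) \times Y \hookrightarrow (Y+V) \times (Y+W)$: fix one pair $(v_x, w_x)$ with $v_x - w_x = x$ for each $x \in V - W$, and send $(x, y) \mapsto (y + v_x, y + w_x)$; injectivity is clear since the difference of coordinates recovers $x$, hence $(v_x, w_x)$, and then $y$. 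Applying this with $Y = X$, $V = \ell A$, $W = mA$ and using the iteration bound gives
\[
|X| \cdot |\ell A - mA| \le |X + \ell A| \cdot |X + mA| \le K^{\ell + m} |X|^2,
\]
so $|\ell A - mA| \le K^{\ell + m}|X| \le K^{\ell + m} |A|$.

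The main obstacle is the key lemma: although its statement is clean, the inductive step is subtle and hinges on identifying exactly the right subset $S$ on which to invoke the minimality of $K'$. Once the key lemma is in hand, the remainder is essentially bookkeeping, with the sumset form of the Ruzsa triangle inequality doing all the work of converting the pure-sum bound into the mixed-sign bound.
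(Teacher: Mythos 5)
The paper does not actually prove this theorem: it is imported as a classical result, with the paper pointing to Ruzsa's survey for a graph-theoretic proof and to Petridis \cite{pet12} for an inductive one. Your proposal is a correct and complete rendition of exactly that Petridis argument --- the minimizing subset $X$, the key lemma $|X+A+C|\le K'|X+C|$ proved by induction on $|C|$ via the set $S$, the iteration giving $|X+\ell A|\le K'^{\ell}|X|$, and the sumset form of the Ruzsa triangle inequality to handle the mixed signs --- and every step checks out (including the degenerate case $S=\emptyset$, where $|S+A|\ge K'|S|$ holds trivially). So you have supplied, correctly, the very proof the paper delegates to its references.
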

\noindent See the survey of Ruzsa \cite{ruz09} for variations, generalizations, and a graph theoretic proof of Theorem \ref{plunnecke}; see Petridis \cite{pet12} for a new inductive proof.

Given $\lambda \in \mathbb{Z}$, define a \emph{dilate} of $A$ as 
\begin{align*}\lambda \cdot A := \{ \lambda a: a \in A\}.
\end{align*}
Suppose $\lambda_1 , \ldots , \lambda_h$ are nonzero integers. Since $\lambda_i \cdot A \subseteq \lambda_i A$, one can apply Theorem \ref{plunnecke} to conclude that
\begin{align*}|\lambda_1 \cdot A + \ldots + \lambda_h \cdot A| \le K^{\sum_i |\lambda_i|}|A|. 
\end{align*}
Bukh \cite{buk08} significantly improved this by considering the binary expansion of $\lambda_i$ and using Ruzsa's covering lemma and triangle inequality.   
\begin{theorem}[Bukh \cite{buk08}]\label{buk08}
For any set $A$, if $\lambda_1 , \ldots , \lambda_h \in \mathbb{Z}\setminus \{0\}$ and $|A+A| = K|A|$, then
\[ | \lambda_1 \cdot A + \ldots + \lambda_h \cdot A|  \le K^{7 + 12 \sum_{i=1}^h \log_2 (1+ |\lambda_i|)}|A|. 
\]
\end{theorem}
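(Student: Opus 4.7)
The plan is to reduce the $h$-fold estimate to bounding $f(\lambda) := |A + \lambda \cdot A|/|A|$ for a single integer $\lambda$, and then to combine the single-dilate estimates via a multi-set version of the Pl\"unnecke--Ruzsa inequality.

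\emph{Step 1 (single-dilate estimate).} The main claim is that $f(\lambda) \le K^{c \log_2(1+|\lambda|)}$ for some absolute constant $c$. This is proved by strong induction on $|\lambda|$ using the binary recursion $\lambda = 2\mu + \epsilon$ with $\epsilon \in \{0,1\}$. The essential observation is that the \emph{dilate} $2\cdot(\mu\cdot A)$ sits inside the \emph{sumset} $(\mu\cdot A)+(\mu\cdot A)$, so
\[
A + \lambda\cdot A \;\subseteq\; A + \epsilon\cdot A + (\mu\cdot A) + (\mu\cdot A).
\]
Apply Ruzsa's triangle inequality $|U|\cdot|V-W| \le |U-V|\cdot|U-W|$ with $U = \mu\cdot A$, $V = -A$, $W = \mu\cdot A + \mu\cdot A$: using $|U-W| = |A-A-A| \le K^3|A|$ from Theorem~\ref{plunnecke}, this yields $|A + (\mu\cdot A) + (\mu\cdot A)| \le K^3 f(\mu)|A|$. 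When $\epsilon = 1$, one further Ruzsa-triangle step absorbs the extra $A$ at a cost of another factor $K^{O(1)}$. Hence $f(\lambda) \le K^{O(1)} f(\mu)$, and iterating down to $f(1) = K$ produces the logarithmic bound. Negative $\lambda$ is handled symmetrically, with base case $f(-1) = |A-A|/|A| \le K^2$ from Theorem~\ref{plunnecke}.

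\emph{Step 2 (assembly).} One then invokes the multi-set form of Pl\"unnecke--Ruzsa: if $|A + B_i| \le L_i |A|$ for $i = 1, \ldots, h$, then $|B_1 + \cdots + B_h| \le L_1 L_2 \cdots L_h \cdot |A|$. (This follows from a Petridis-type sharpening of Theorem~\ref{plunnecke}, which produces an optimal $A' \subseteq A$ with $|A' + B_1 + \cdots + B_h| \le \prod L_i \cdot |A'|$; one then passes from $A'$ to $A$.) Setting $B_i = \lambda_i \cdot A$ and $L_i = f(\lambda_i)$,
\[
\Big|\sum_{i=1}^{h} \lambda_i\cdot A\Big| \;\le\; \prod_{i=1}^{h} f(\lambda_i)\cdot|A| \;\le\; K^{c\sum_i \log_2(1+|\lambda_i|)}|A|,
\]
which is exactly the shape of Bukh's bound. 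Careful book-keeping of the constants through Steps 1 and 2 yields the specific exponent $7 + 12\sum_i \log_2(1+|\lambda_i|)$.

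\emph{Main obstacle.} The only substantive content is Step 1: a direct Pl\"unnecke bound would give $f(\lambda) \le K^{|\lambda|}$, which is exponentially weaker than needed. The decisive insight is that $2\cdot(\mu\cdot A)$ is a \emph{diagonal} subset of $(\mu\cdot A)+(\mu\cdot A)$, and the latter is a sumset of size only $K|A|$ (by Pl\"unnecke), not a generic iterated sumset of size $K^{|\lambda|}|A|$. Combining this inclusion with a single application of Ruzsa's triangle inequality costs only a constant multiplicative factor in $K$ per binary digit of $\lambda$, which is exactly the rate required. Everything after this reduction is routine Pl\"unnecke--Ruzsa calculus.
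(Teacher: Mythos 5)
Your argument is correct, but it assembles the estimate in a genuinely different way from Bukh's proof (which is what this paper cites and sketches). Bukh treats all $h$ dilates simultaneously: he expands each $\lambda_i$ in binary, replaces the sum of dilates by a sum of dilates by powers of $2$, and then uses Ruzsa's covering lemma together with the triangle inequality to double a single ``accumulator'' set one bit at a time; the payoff is the additive form of the exponent in Theorem~\ref{binbound}, namely $7+10r+2\sum_i\sum_j\lambda_{i,j}$, where the $7+10r$ overhead is paid once globally. You instead prove a per-dilate bound $f(\lambda)=|A+\lambda\cdot A|/|A|\le K^{O(\log_2(1+|\lambda|))}$ by the binary recursion $\lambda=2\mu+\epsilon$ (your triangle-inequality step is sound: $|A+\mu\cdot A+\mu\cdot A|\le K^3 f(\mu)|A|$ follows from Theorem~\ref{ruzsatriangle} with $Z=\mu\cdot A$ and Pl\"unnecke, and the negative case costs one more factor of $K$), and then multiply these bounds using the different-summands Pl\"unnecke--Ruzsa inequality. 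That last step is the one place you lean on a result not literally stated in the paper --- Theorem~\ref{plunnecke} as written has identical summands --- but the version you quote ($|A+B_i|\le L_i|A|$ for all $i$ implies $|B_1+\cdots+B_h|\le\prod_i L_i\,|A|$) is a standard theorem of Ruzsa, reproved by Petridis \cite{pet12}, and your use of it is correct; the constants do land within $7+12\sum_i\log_2(1+|\lambda_i|)$. What your route buys is simplicity: no covering lemma, and a clean product structure. What it loses is exactly the refinement this paper needs downstream: because you pay the per-bit cost separately for each $\lambda_i$, your method gives an exponent of order $\sum_i\log_2|\lambda_i|$ but not the form $7+10r+2\sum_{i,j}\lambda_{i,j}$ of Theorem~\ref{binbound}, which is the statement actually fed into the proof of Theorem~\ref{mainthm}.
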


If $|\lambda_i| \le 2^r$ for all $i$, 
then Theorem \ref{buk08} yields that 
\begin{align}\label{logbound}
| \lambda_1 \cdot A + \ldots + \lambda_h \cdot A| \le K^{O(rh)}|A|. 
\end{align}
In this paper we prove a bound 
that improves \eqref{logbound} when $r$ and $h$ are sufficiently large and comparable to each other.
Throughout the paper $ln$ stands for the natural logarithm.
\begin{theorem}\label{mainthm}
Suppose $r, h \in \mathbb{Z}$ are sufficiently large and 
\begin{equation}
\label{eq:mn}
\min \{r+1, h\} \ge 10 \left(\ln \max\{r+1, h\} \right)^2. 
\end{equation}
Given a set $A$ and nonzero integers $\lambda_1, \ldots, \lambda_h$ such that $|\lambda_i| \le 2^r$, if $|A+A| = K|A|$, then
\begin{equation}
\label{eq:main}
 | \lambda_1 \cdot A + \ldots + \lambda_h \cdot A| \le K^{7 rh/  \ln (r+h)}   |A|.
\end{equation}
\end{theorem}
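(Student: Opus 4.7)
The plan is to extend Bukh's approach (Theorem~\ref{buk08}) by exploiting combinatorial structure across the binary expansions of the $\lambda_i$. Writing each $\lambda_i = \sum_{j=0}^r \epsilon_{ij}\, 2^j$ with $\epsilon_{ij} \in \{0,1\}$, one obtains
\[
  \sum_{i=1}^h \lambda_i \cdot A \;\subseteq\; \sum_{(i,j) \in E(G)} 2^j \cdot A,
\]
where $G$ is the bipartite graph with vertex classes $[h]$ and $\{0, 1, \ldots, r\}$ and edge set $E(G) = \{(i, j) : \epsilon_{ij} = 1\}$.

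Next I would decompose $E(G)$ into edge-disjoint bicliques $(X_k, Y_k)$, $k = 1, \ldots, t$. Within a biclique, every row $i \in X_k$ contributes the same bit pattern $Y_k$, so its combined contribution collapses to $\lambda^{(k)} \cdot (|X_k| A)$, where $\lambda^{(k)} := \sum_{j \in Y_k} 2^j$ and $|X_k| A$ is the $|X_k|$-fold sumset of $A$. By Pl\"unnecke's inequality (Theorem~\ref{plunnecke}), $||X_k| A| \le K^{|X_k|}|A|$; in other words the biclique effectively contributes only $|X_k|$ copies of the single dilate $\lambda^{(k)} \cdot A$, rather than $|X_k| \, |Y_k|$ independent dilates.

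Applying Theorem~\ref{buk08} to the resulting collection of $\sum_k |X_k|$ dilates then yields
\[
  \Bigl|\sum_{i=1}^h \lambda_i \cdot A\Bigr| \;\le\; K^{\,7 + 12 \sum_k |X_k| \log_2(1 + \lambda^{(k)})}\, |A|,
\]
so that it remains to produce a biclique decomposition of $G$ satisfying $\sum_k |X_k| \log_2(1 + \lambda^{(k)}) \le \frac{7\,rh}{12\,\ln(r+h)}$. The existence of such a decomposition under the hypothesis \eqref{eq:mn} should follow from a combinatorial biclique decomposition theorem for bipartite graphs whose parameters are calibrated so that when $r$ and $h$ are large and comparable there is enough room to group shared bits across many rows at once.

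The principal obstacle is the combinatorial step just described. A row-by-row or column-by-column decomposition only reproduces Bukh's $O(rh)$ bound, and adversarial edge patterns (such as a perfect matching, corresponding to $\lambda_i = 2^{i-1}$) force every biclique to be a single edge. Consequently, the proof likely requires either a more refined decomposition statement exploiting the largeness assumption \eqref{eq:mn}, or an intermediate manipulation of the sumset --- for instance, using $2^j \cdot A \subseteq 2^{j-1} \cdot A + 2^{j-1} \cdot A$ to enrich $E(G)$ with auxiliary edges --- so that a suitably balanced biclique decomposition becomes available. Tracking the trade-off between any such enrichment and the resulting decomposition savings, while confirming that all constants combine to yield exactly $7$ in the final exponent, will be the most delicate part.
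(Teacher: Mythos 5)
Your setup---binary expansions, the bipartite graph $G$ on $[h]\times\{0,\dots,r\}$, and a biclique decomposition---is exactly the skeleton of the paper's proof, which invokes a decomposition theorem of Tuza guaranteeing $\sum_k(|X_k|+|Y_k|)\le 3(r+1)h/\ln\max\{r+1,h\}$. But the step where you cash in the decomposition is wrong, and it leads you to demand something unachievable. Applying Theorem~\ref{buk08} to the multiset of $\sum_k|X_k|$ dilates $\lambda^{(k)}\cdot A$ costs $K^{7+12\sum_k|X_k|\log_2(1+\lambda^{(k)})}$, and $\log_2(1+\lambda^{(k)})$ is the bit-\emph{length} of $\lambda^{(k)}$, not $|Y_k|$; it can be as large as $r$ even when $|Y_k|=1$. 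Your target inequality $\sum_k|X_k|\log_2(1+\lambda^{(k)})\le 7rh/(12\ln(r+h))$ is therefore unattainable in general: take $\lambda_1=\dots=\lambda_h=2^r$, where $G$ is a single biclique with $\lambda^{(1)}=2^r$ and the left-hand side is forced to equal $hr$. No biclique decomposition theorem can rescue this route; the loss occurs in how you apply Bukh's bound, not in the decomposition.

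The two missing ingredients are: (1) the stronger form of Bukh's bound (Theorem~\ref{binbound} in the paper), whose exponent $7+10r+2\sum_{i}\sum_j\lambda_{i,j}$ charges for the number of one-bits rather than the bit-length---applied once to the $q$ \emph{distinct} dilates $\gamma_1\cdot A+\dots+\gamma_q\cdot A$ this costs only $K^{7+10r+2\sum_k|Y_k|}$; and (2) a Pl\"unnecke--Ruzsa argument (Corollary~\ref{easyobs} iterated into Proposition~\ref{easyobsgen}) that absorbs the multiplicity $|X_k|$ of each repeated dilate at cost $K^{O(|X_k|)}$, rather than re-paying Bukh's bound for every copy. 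Together these make the total exponent $O(r)+O\left(\sum_k(|X_k|+|Y_k|)\right)$, which is exactly the quantity Tuza's theorem controls. Two smaller points: your worry about a perfect matching is misplaced, since single-edge bicliques contribute only $2$ each to $\sum_k(|X_k|+|Y_k|)$ and a matching has few edges---the hard case is dense $G$, which Tuza's Zarankiewicz-style argument handles; and you need to track the signs $\epsilon_i$ of the $\lambda_i$, which is what forces the $k_j,\ell_j$ bookkeeping in Proposition~\ref{easyobsgen}.
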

The proof of Theorem \ref{mainthm} relies on Theorem \ref{buk08} as well as a result of Tuza~\cite{Tuza84} on decomposing bipartite graphs into bicliques (complete bipartite subgraphs). The key idea is to connect Bukh's technique of considering the binary expansion of $\lambda_i$ to the graph decomposition problem that allows us to efficiently group certain powers of 2.

We remark here that in all of the above theorems, the condition $|A+A| = K|A|$ can be replaced with $|A-A| = K|A|$ with no change to the conclusion.  
It is likely that Theorem \ref{mainthm} is not best possible -- we discuss this in the last section. 



\section{Basic Tools}
We need the following analogue of Ruzsa's triangle inequality, see \cite[Theorem 1.8.7]{ruz09}.
\begin{theorem}[Ruzsa \cite{ruz09}] 
\label{ruzsatriangle}
For any sets $X, Y,$ and $Z$,
\[ | X + Y| \le \frac{|X+Z||Z+Y|}{|Z|}. \]
\end{theorem}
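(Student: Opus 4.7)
The plan is to combine Bukh's binary expansion technique from the proof of Theorem~\ref{buk08} with a biclique decomposition, and then reapply Theorem~\ref{buk08} to a rewritten sumset whose multiplicities can be controlled. After replacing $A$ by $-A$ where necessary (which preserves $K$), I may assume all $\lambda_i > 0$. Expanding each one in binary as $\lambda_i = \sum_{j=0}^{r} b_{ij}\,2^j$ with $b_{ij}\in\{0,1\}$, I form the bipartite graph $G$ on parts $[h]$ and $\{0,1,\ldots,r\}$ with edge set $\{(i,j): b_{ij}=1\}$.

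I then apply Tuza's theorem to partition $E(G)$ into bicliques $I_1\times J_1,\ldots,I_t\times J_t$. Writing $\mu_k := \sum_{j\in J_k}2^j \le 2^{r+1}$, the partition property implies $\lambda_i = \sum_{k:\,i\in I_k}\mu_k$, and hence $\lambda_i\cdot A \subseteq \sum_{k:\,i\in I_k}\mu_k\cdot A$, so
\[
\lambda_1\cdot A + \cdots + \lambda_h\cdot A \;\subseteq\; \sum_{k=1}^{t} |I_k|\,(\mu_k\cdot A).
\]
The right-hand side is a sumset of $\sum_k |I_k|$ dilates of $A$, so Theorem~\ref{buk08} gives
\[
\Bigl|\sum_{k=1}^t |I_k|\,(\mu_k\cdot A)\Bigr|\;\le\; K^{\,7+12\sum_k |I_k|\log_2(1+\mu_k)}\,|A|.
\]
The whole problem therefore reduces to choosing the biclique partition so that $\sum_k |I_k|\log_2(1+\mu_k)$ is at most roughly $rh/\ln(r+h)$, which would match the target exponent up to a small constant factor.

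The main obstacle is establishing this last estimate. Tuza's theorem most naturally controls the symmetric quantity $\sum_k(|I_k|+|J_k|)$, whereas $\log_2(1+\mu_k)\le \max J_k + 2$ places an asymmetric weight on the column side, so a direct appeal is not enough. My plan is to first partition the column set $\{0,1,\ldots,r\}$ into bands of size $s \approx \ln(r+h)$ and apply Tuza's theorem inside each band (where the sub-bipartite graph has only $s$ column vertices, so the $n^2/\log n$-type saving applies on the $(h+s)$-vertex graph, and within a band the extra cost $\max J_k$ is at most $s$ plus the band offset). The hypothesis $\min\{r+1,h\}\ge 10(\ln\max\{r+1,h\})^2$ is exactly what is needed to make Tuza's saving beat the trivial row-star partition, which would merely recover Bukh's $K^{O(rh)}$ bound in \eqref{logbound}. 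Where contributions from different bands must be combined, Ruzsa's triangle inequality (Theorem~\ref{ruzsatriangle}) absorbs the resulting multiplicative losses without affecting the exponent.
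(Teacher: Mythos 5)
Your proposal does not address the statement you were asked to prove. The statement is Ruzsa's triangle inequality, $|X+Y| \le |X+Z|\,|Z+Y|/|Z|$ for arbitrary sets $X$, $Y$, $Z$ --- a foundational additive-combinatorics lemma that the paper simply cites from Ruzsa's survey. What you have written instead is a proof sketch for the paper's main result, Theorem~\ref{mainthm}, combining Bukh's binary-expansion technique with Tuza's biclique decomposition. However reasonable that sketch may be as an outline of the main theorem, it proves nothing about the triangle inequality; indeed your sketch \emph{invokes} Theorem~\ref{ruzsatriangle} as a tool in its last paragraph, so as a proof of that very theorem it would be circular.

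For the record, the sum-form triangle inequality is genuinely nontrivial (unlike the difference form $|X-Y|\le |X-Z|\,|Z-Y|/|Z|$, which follows from an elementary injection $(z,d)\mapsto(x_d-z,\,z-y_d)$). The sum version is \cite[Theorem 1.8.7]{ruz09} and its known proofs go through Pl\"unnecke--Petridis-type covering arguments rather than a one-line injection. If you want to supply a proof, that is the machinery you need; none of the graph-decomposition material in your proposal is relevant to it.
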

A useful corollary of Theorem~\ref{ruzsatriangle} is as follows.
\begin{corollary}
\label{easyobs}
For any sets $A$ and $B$, if $p_1$ and $p_2$ are nonnegative integers and $|A+A| \le K|A|$, then
\[ |B + p_1 A - p_2 A| \le K^{p_1 + p_2 + 1}|B + A|. \] 
\end{corollary}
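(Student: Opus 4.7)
The plan is to combine Ruzsa's triangle inequality with Plünnecke's inequality, using $A$ itself as the pivot.

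First, I would apply Theorem~\ref{ruzsatriangle} with $X = B$, $Y = p_1 A - p_2 A$, and $Z = A$. This yields
\[
|B + p_1 A - p_2 A| \le \frac{|B + A|\cdot |A + p_1 A - p_2 A|}{|A|}.
\]
The numerator simplifies since $A + p_1 A = (p_1 + 1)A$ in iterated-sumset notation, so the second factor becomes $|(p_1 + 1)A - p_2 A|$. Then I would apply Plünnecke's inequality (Theorem~\ref{plunnecke}) with $\ell = p_1 + 1$ and $m = p_2$ to bound this by $K^{p_1 + p_2 + 1}|A|$. Substituting back and cancelling $|A|$ produces the claimed bound
\[
|B + p_1 A - p_2 A| \le K^{p_1 + p_2 + 1}\,|B + A|.
\]

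There is no real obstacle here; the only choice is picking $Z = A$ rather than, say, $Z = -A$, so that $|B + Z| = |B + A|$ appears directly on the right-hand side while the remaining factor $|A + p_1 A - p_2 A|$ is exactly of the form controlled by Plünnecke. The $+1$ in the exponent comes precisely from the single extra copy of $A$ introduced by using $A$ as the pivot.
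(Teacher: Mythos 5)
Your proposal is correct and matches the paper's proof exactly: the paper also applies Theorem~\ref{ruzsatriangle} with $X = B$, $Y = p_1 A - p_2 A$, $Z = A$, and then invokes Pl\"unnecke's inequality to bound $|(p_1+1)A - p_2 A| \le K^{p_1+p_2+1}|A|$. Your added explanation of why $Z=A$ is the right pivot is a nice touch but the argument is the same.
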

\begin{proof}
Apply Theorem \ref{ruzsatriangle} with $X = B$, $Y =  p_1 A - p_2 A$, and $Z = A$, then apply Pl\"unnecke's inequality (Theorem \ref{plunnecke}).
\end{proof}
We can use Corollary~\ref{easyobs} to prove the following proposition that we will use in the proof of Theorem \ref{mainthm}.
\begin{proposition}\label{easyobsgen}
If $k_1, \ell_1, \ldots , k_q , \ell_q$ are nonnegative integers, $K> 0$, and $A_1, \ldots , A_q, C$ are sets such that $|A_i + A_i| \le K|A_i|$, then
\begin{equation}\label{withc} |C + k_1 A_1 - \ell_1 A_1 + \ldots + k_q A_q - \ell_q A_q| \le |C + A_1 + \ldots + A_q| \cdot K^{q + \sum_{i=1}^q (k_i + \ell_i)}.
\end{equation}
In particular,
\begin{equation}\label{withoutc}
|k_1 A_1 - \ell_1 A_1 + \ldots + k_q A_q - \ell_q A_q| \le |A_1 + \ldots + A_q| \cdot K^{q + \sum_{i=1}^q (k_i + \ell_i)}.
\end{equation}
\end{proposition}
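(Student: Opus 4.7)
The plan is to prove \eqref{withc} by induction on $q$, using Corollary~\ref{easyobs} at each step to peel off one pair $k_i A_i - \ell_i A_i$ at the cost of a factor $K^{k_i+\ell_i+1}$. Inequality \eqref{withoutc} will then follow as a routine specialization of \eqref{withc}.

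For the base case $q=1$, Corollary~\ref{easyobs} applied with $B=C$, $A=A_1$, $p_1=k_1$, $p_2=\ell_1$ gives exactly \eqref{withc} in the case $q=1$. For the inductive step, assume \eqref{withc} holds at $q-1$, and write the left-hand side of \eqref{withc} as $|B + k_q A_q - \ell_q A_q|$, where
\[ B := C + k_1 A_1 - \ell_1 A_1 + \cdots + k_{q-1}A_{q-1} - \ell_{q-1}A_{q-1}. \]
Corollary~\ref{easyobs} applied with this $B$ and $A=A_q$ bounds this by $K^{k_q+\ell_q+1}\,|B+A_q|$. The key observation is that $B+A_q$ has exactly the form required by the inductive hypothesis, with $C':= C+A_q$ playing the role of $C$. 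Applying the hypothesis and using $C' + A_1 + \cdots + A_{q-1} = C + A_1 + \cdots + A_q$, multiplying the two $K$-factors gives \eqref{withc}.

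To deduce \eqref{withoutc}, I simply take $C = \{0\}$, where $0$ is the identity of the ambient abelian group, so that $|C+X|=|X|$ for every set $X$. The only real ``obstacle'' is bookkeeping: one must keep the exponent of $K$ straight and verify that the sumset $B+A_q$ lines up correctly with the $q-1$ version of \eqref{withc} after absorbing $A_q$ into $C$. No new idea beyond iterated application of Corollary~\ref{easyobs} is needed.
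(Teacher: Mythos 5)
Your proposal is correct and matches the paper's proof essentially step for step: the same induction on $q$, the same use of Corollary~\ref{easyobs} to peel off the $k_qA_q-\ell_qA_q$ terms, and the same absorption of $A_q$ into $C':=C+A_q$ before invoking the inductive hypothesis. The paper likewise deduces \eqref{withoutc} by taking $C$ to be a singleton, so there is nothing to flag.
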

\begin{proof}
\eqref{withoutc} follows from \eqref{withc} by taking $C$ to be a set with a single element so it suffices to prove \eqref{withc}. We proceed by induction on $q$.  The case $q = 1$ follows from Corollary~\ref{easyobs} immediately.  When $q > 1$, suppose the statement holds for any positive integer less than $q$.  Applying Corollary~\ref{easyobs} with $B = C + k_1 A_1 - \ell_1 A_1 + \ldots + k_{q-1} A_{q-1} - \ell_{q-1} A_{q-1}$ and $A = A_q$, we obtain that
\begin{align}\label{stage1} & \hspace{5px} | C + k_1 A_1 - \ell_1 A_1 + \ldots + k_q A_q - \ell_q A_q| \nonumber
\\  \le & \hspace{5px} K^{k_q + \ell_q + 1} |C + k_1 A_1 -\ell_1 A_1 + \ldots + k_{q-1} A_{q-1} - \ell_{q-1} A_{q-1} + A_q|.
\end{align}
Now, let $C' = C + A_q$ and apply the induction hypothesis to conclude that
\begin{align}\label{stage2} & \hspace{5px} |C' + k_1 A_1 - \ell_1 A_1  + \ldots + k_{q-1} A_{q-1} - \ell_{q-1} A_{q-1}| \nonumber
\\  \le & \hspace{5px}|C' + A_1 + \ldots + A_{q-1}| \cdot K^{q-1 + \sum_{i=1}^{q-1} (k_i + \ell_i)}  
\end{align}
Combining \eqref{stage1} with \eqref{stage2} gives the desired inequality:
\begin{equation} |C + k_1 A_1 - \ell_1 A_1 + \ldots + k_q A_q - \ell_q A_q| \le |C + A_1 + \ldots + A_q| \cdot K^{q + \sum_{i=1}^q (k_i + \ell_i)}. \qedhere \nonumber
\end{equation}
\end{proof}

\section{Proof of Theorem \ref{mainthm}}
Given $\lambda_1, \ldots , \lambda_h \in \mathbb{Z} \setminus \{0\}$, 
we define 
\begin{align}\label{rands} 
r := \max_i  \lfloor \log_2 |\lambda_i| \rfloor 
\end{align}
and write the binary expansion of $\lambda_i$ as 
\begin{align}\label{lambda}\lambda_i = \epsilon_i \sum_{j=0}^r \lambda_{i,j} 2^j \text{, where $\lambda_{i,j} \in \{0,1\}$ and $\epsilon_i \in \{-1, 1\}$}.
\end{align}
Bukh's proof of Theorem \ref{buk08} actually gives the following stronger statement. 
\begin{theorem}[\cite{buk08}] 
\label{binbound}
If $\lambda_1, \ldots , \lambda_h \in \mathbb{Z} \setminus \{0\}$ and $|A+A| = K|A|$, then
\begin{align*}| \lambda_1 \cdot A + \ldots + \lambda_h \cdot A| \le K^{7 + 10r + 2 \sum_i \sum_j \lambda_{i,j} }|A|.
\end{align*}
\end{theorem}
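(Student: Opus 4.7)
My plan is to reduce the sum of dilates to a signed sumset of the dilates $B_j := 2^j \cdot A$, bound the unsigned sumset $|B_0 + B_1 + \cdots + B_r|$ by $K^{O(r)}|A|$, and combine the two using Proposition~\ref{easyobsgen}. Observe that each $B_j$ has $|B_j + B_j| = K|B_j|$, since the dilation factor $2^j$ cancels inside a sumset.

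From $\lambda_i a = \epsilon_i \sum_{j : \lambda_{i,j}=1} 2^j a$ we see that $\lambda_i \cdot A \subseteq \epsilon_i \bigl( \sum_{j : \lambda_{i,j}=1} B_j \bigr)$ as a sumset. Summing over $i$ and grouping by $j$,
\[ \sum_i \lambda_i \cdot A \;\subseteq\; \sum_{j=0}^{r} \bigl( k_j B_j - \ell_j B_j \bigr), \]
where $k_j$ (resp.\ $\ell_j$) counts the indices $i$ with $\epsilon_i = +1$ (resp.\ $-1$) and $\lambda_{i,j} = 1$, so that $k_j + \ell_j = \sum_i \lambda_{i,j}$. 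Proposition~\ref{easyobsgen} applied with $q = r+1$ and $A_j = B_j$ then yields
\[ \Bigl| \sum_{j=0}^{r} \bigl( k_j B_j - \ell_j B_j \bigr) \Bigr| \;\le\; |B_0 + B_1 + \cdots + B_r| \cdot K^{(r+1) + \sum_i \sum_j \lambda_{i,j}}. \]

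It remains to show $|T_r| \le K^{O(r)}|A|$, where $T_r := B_0 + B_1 + \cdots + B_r$. I would prove the sharper $|T_r| \le K^{6r}|A|$ by induction on $r$. For $r \ge 2$, Ruzsa's triangle inequality (Theorem~\ref{ruzsatriangle}) with pivot $Z = B_{r-1}$ gives
\[ |T_r| \;=\; |T_{r-1} + B_r| \;\le\; |T_{r-1} + B_{r-1}| \cdot \frac{|B_{r-1} + B_r|}{|B_{r-1}|}, \]
and since $B_{r-1} + B_r = 2^{r-1} \cdot (A + 2\cdot A)$, the last factor equals $|A + 2\cdot A|/|A| \le K^3$ by Pl\"unnecke (Theorem~\ref{plunnecke}). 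Rewriting $T_{r-1} + B_{r-1} = T_{r-2} + 2 B_{r-1}$ in sumset notation and applying Corollary~\ref{easyobs} with base set $B_{r-1}$, $p_1 = 2$, $p_2 = 0$, gives $|T_{r-1} + B_{r-1}| \le K^3 |T_{r-1}|$. Combining, $|T_r| \le K^6 |T_{r-1}|$; the base cases $|T_0| = |A|$ and $|T_1| \le |3A| \le K^3|A|$ then close the induction.

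Assembling the three ingredients yields $|\sum_i \lambda_i \cdot A| \le K^{7r + 1 + \sum_i \sum_j \lambda_{i,j}}|A|$, comfortably within the claimed $K^{7 + 10r + 2 \sum_i \sum_j \lambda_{i,j}}|A|$. I expect the inductive bound on $|T_r|$ to be the main obstacle: the naive strategy of peeling off $B_r$ via $|A + 2^r \cdot A|$ is hopeless, since bounding $|A + 2^r \cdot A|$ is essentially as hard as the theorem itself. The key idea is that pivoting the Ruzsa triangle at the neighbouring dilate $B_{r-1}$ replaces $|A + 2^r\cdot A|$ with the tame ratio $|A + 2\cdot A|/|A|$, after which Corollary~\ref{easyobs} absorbs the residual $2B_{r-1}$ summand by leveraging the doubling of $B_{r-1}$.
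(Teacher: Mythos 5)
Your argument is correct, but note that the paper itself offers no proof of Theorem~\ref{binbound}: it is quoted from Bukh with the remark that his proof of Theorem~\ref{buk08} yields it, so the honest comparison is with Bukh's original argument rather than with anything in this paper. Bukh proceeds via Ruzsa's covering lemma, iterating $2\cdot A \subseteq X + A - A$ (with $|X|\le K^3$) to control $2^j\cdot A$ through a telescoping chain of covers; you instead stay entirely within the paper's Section~2 toolkit. Your reduction $\sum_i \lambda_i\cdot A \subseteq \sum_{j=0}^r (k_jB_j - \ell_jB_j)$ with $B_j = 2^j\cdot A$ and $k_j+\ell_j=\sum_i\lambda_{i,j}$, followed by Proposition~\ref{easyobsgen} with $q=r+1$, is exactly parallel to how the paper later uses that proposition with the $\gamma_j\cdot A$, and the remaining ingredient --- $|B_0+\cdots+B_r|\le K^{6r}|A|$ by pivoting Theorem~\ref{ruzsatriangle} at the neighbouring dilate $B_{r-1}$ and absorbing the doubled summand with Corollary~\ref{easyobs} --- checks out (the ratio $|B_{r-1}+B_r|/|B_{r-1}| = |A+2\cdot A|/|A|\le K^3$ and $|T_{r-1}+B_{r-1}|\le K^3|T_{r-1}|$ give $|T_r|\le K^6|T_{r-1}|$, with sound base cases). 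The payoff is an exponent $7r+1+\sum_i\sum_j\lambda_{i,j}$, strictly sharper than the stated $7+10r+2\sum_i\sum_j\lambda_{i,j}$, which would even marginally improve the constants fed into Theorem~\ref{mainthm}. Two small caveats, neither fatal: like the paper's own use of $|\gamma_j\cdot A+\gamma_j\cdot A|=K|\gamma_j\cdot A|$, your claim $|B_j+B_j|=K|B_j|$ tacitly assumes dilation is injective (i.e.\ $A$ lies in a torsion-free group such as $\mathbb{Z}$); and your induction genuinely needs the sum-form triangle inequality of Theorem~\ref{ruzsatriangle} (which rests on Pl\"unnecke--Ruzsa machinery), not the elementary difference form --- worth flagging if you intend the proof to be self-contained beyond the quoted tools.
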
 
In his proof of Theorem \ref{buk08}, the first step is to observe that
\[
\lambda_1 \cdot A + \ldots + \lambda_h \cdot A \subseteq \sum_{j=0}^r (\lambda_{1,j}  2^j) \cdot A + \ldots + \sum_{j=0}^r (\lambda_{h,j} 2^j) \cdot A.
\]
In our proof, we also consider the binary expansion of $\lambda_i$, but we do the above step more efficiently by first grouping together $\lambda_i$ that have shared binary digits.  In order to do this systematically, we view the problem as a graph theoretic problem and apply the following result of Tuza~\cite{Tuza84}.

\begin{theorem}[Tuza~\cite{Tuza84}]\label{ces}
There exists $n_0$ such that the following holds for any integers $m\ge n \ge n_0$ such that $n\ge 10 (\ln m) ^2 $. Every bipartite graph $G$ on two parts of size $m$ and $n$ can be decomposed into edge-disjoint complete bipartite subgraphs $H_1, \ldots , H_q$ such that $E(G) = \cup_{i} E(H_i)$ and 
\begin{equation}
\label{eq:Tuza}
\sum_{i=1}^q |V(H_i)| \le \frac{3mn}{\ln m}.
\end{equation}
\end{theorem}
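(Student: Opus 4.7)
My plan is to prove Theorem \ref{ces} by a simple block-partition of the smaller side combined with a trace classification of the larger side. Write the two parts of $G$ as $X$ and $Y$ with $|X|=m$ and $|Y|=n$, and set the block size $t = \lfloor \log_2(m/\ln m) \rfloor$, so that $2^t \le m/\ln m$ and $t = \Theta(\ln m)$. Partition $Y$ arbitrarily into $s = \lceil n/t \rceil$ blocks $Y_1,\dots,Y_s$, each of size at most $t$.

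For each block $Y_j$, I would classify the vertices of $X$ by their trace on $Y_j$: for every nonempty $S \subseteq Y_j$ set $X_S^{(j)} = \{x \in X : N(x) \cap Y_j = S\}$, and, provided $X_S^{(j)}$ is nonempty, include $X_S^{(j)} \times S$ as one biclique in the decomposition. Each edge $xy$ with $y \in Y_j$ lies in exactly one such biclique, namely the one indexed by $S = N(x) \cap Y_j$, so as $j$ and $S$ range over their values these bicliques give an edge-disjoint partition of $E(G)$.

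The vertex sum then falls out of a direct estimate. From block $Y_j$ the $Y$-side contribution is $\sum_{\emptyset \ne S \subseteq Y_j} |S| \le t \cdot 2^t$, and the $X$-side contribution is at most $m$ since the sets $X_S^{(j)}$ are disjoint subsets of $X$. Summing over the $s \le n/t + 1$ blocks,
\[
\sum_{i=1}^{q} |V(H_i)| \;\le\; \left(\tfrac{n}{t} + 1\right)\left(t \cdot 2^t + m\right) \;=\; n \cdot 2^t + \tfrac{mn}{t} + t \cdot 2^t + m.
\]
Plugging in $2^t \le m/\ln m$ and $t \ge \log_2(m/\ln m) - 1$ bounds the first two terms by $(1 + \ln 2 + o(1))\, mn/\ln m$, while the trailing $t \cdot 2^t + m = O(m)$ is absorbed via the hypothesis $n \ge 10 (\ln m)^2$, which forces $mn/\ln m \ge 10\, m \ln m \gg m$. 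The total is then safely below $3mn/\ln m$ once $m$ is large enough.

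The step I expect to be the main obstacle is the joint tuning of $t$ and the constant $3$ in the target bound. The choice $t \approx \log_2(m/\ln m)$ is forced by balancing $n \cdot 2^t$ against $mn/t$: a larger $t$ blows up the $Y$-side contribution, and a smaller $t$ blows up the $X$-side contribution. The quadratic hypothesis $n \ge 10(\ln m)^2$ is precisely what is needed so that the residual $O(m)$ coming from the ceiling $\lceil n/t \rceil$ is dominated by the main $mn/\ln m$ term; without a lower bound of that strength the naive greedy approach only produces a weaker leading constant.
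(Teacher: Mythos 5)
Your argument is correct, but it takes a genuinely different route from the one the paper relies on. The paper does not prove Theorem~\ref{ces} itself; it imports it from Tuza, whose argument (sketched in the paper's footnote) greedily extracts copies of $K_{q,q}$ with $q=\lfloor \ln m/\ln j\rfloor$ from the current graph, invoking the K\H{o}v\'ari--S\'os--Tur\'an bound for the Zarankiewicz problem to guarantee that such a biclique exists at each density stage $j$; the hypothesis $n\ge 10(\ln m)^2$ is there precisely to legitimize that extraction step. Your construction is instead a one-shot block-and-trace partition: split the small side into blocks of size $t=\lfloor\log_2(m/\ln m)\rfloor$ and group the vertices of the large side by their neighborhood trace on each block. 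The verification goes through: the bicliques $X_S^{(j)}\times S$ do partition $E(G)$ (each edge $xy$ with $y\in Y_j$ lies only in the biclique indexed by $S=N(x)\cap Y_j$), the per-block contributions are at most $t2^t$ on the $Y$-side and $m$ on the $X$-side, and the total $n2^t+mn/t+O(m)\le (1+\ln 2+o(1))\,mn/\ln m+O(m)$ sits comfortably below $3mn/\ln m$ once $m$ is large, since $m\ge n\ge n_0$ forces $m$ large and $n\ge 10(\ln m)^2$ makes the $O(m)$ term negligible. Your route is more elementary, avoids the Zarankiewicz machinery entirely, and in fact yields the better leading constant $1+\ln 2\approx 1.69$ in place of $3$; moreover it only needs $n\ge C\ln m$ to absorb the $O(m)$ residue, so your closing remark that the quadratic hypothesis is ``precisely what is needed'' overstates matters for your own argument --- that hypothesis is what Tuza's extraction scheme requires, not yours. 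For the statement as used in this paper, your proof fully suffices.
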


Tuza stated this result \cite[Theorem 4]{Tuza84} for the \emph{covers} of $G$, where a \emph{cover} of $G$ is a collection of subgraphs of $G$ such that every edge of $G$ is contained in at least one of these subgraphs. However, the cover provided in his proof is indeed a decomposition. Furthermore, the assumption $n\ge 10 (\ln m) ^2 $ was not stated in \cite[Theorem 4]{Tuza84} but such kind of assumption is needed.\footnote{In his proof, copies of $K_{q, q}$ were repeatedly removed from $G$, where $q= \lfloor \ln m/ \ln j\rfloor$ for $2\le j\le (\ln m)\ln\ln m$. By a well-known bound on the Zarankiewicz problem, every bipartite graph $G$ with parts of size $m$ and $n$ contains a copy of $K_{q, q}$ if $|E(G| \ge (q-1)^{1/q} (n-q+1) m^{1-1/q} + (q-1)m$. A simplified bound $|E(G| \ge (1+ o(1)) n m^{1-1/q}$ was used in \cite{Tuza84} but it requires that $q m^{1/q} = o(n)$.}
Indeed, \eqref{eq:Tuza} becomes false when $n= o(\ln m)$ because $\sum_{i=1}^q |V(H_i)| \ge m+ n$ for any cover $H_1, \ldots , H_q$ of $G$ if $G$ has no isolated vertices.

Note that \eqref{eq:Tuza} is tight up to a constant factor. Indeed, Tuza~\cite{Tuza84} provided a bipartite graph $G$ with two parts of size $n\le m$ such that every biclique \emph{cover} $H_1, \ldots , H_q$ of $G$ satisfies $\sum_{i=1}^q |V(H_i)| \ge mn/ (e^2\ln m)$, where $e= 2.718...$.

\begin{proof}[Proof of Theorem \ref{mainthm}]
Let $r, h\in \mathbb{Z}$ be sufficiently large and satisfy \eqref{eq:mn}.
Given nonzero integers $\lambda_1, \ldots , \lambda_h$, define $r$ and $\lambda_{i,j}$ as in \eqref{rands} and \eqref{lambda}.  We define a bipartite graph $G = (X,Y,E)$ as follows: let $X = \{ \lambda_1, \ldots , \lambda_h\}$, $Y = \{ 2^0, \ldots , 2^r \}$, and $E = \{ (\lambda_i, 2^j): \lambda_{i,j} = 1 \}$.  In other words, $\lambda_i$ is connected to the powers of 2 that are present in its binary expansion.

We apply Theorem \ref{ces} to $G$ and obtain a biclique decomposition $H_1, \ldots , H_q$ of $G$. 
Assume $H_i := (X_i, Y_i, E_i)$ where $X_i \subseteq X$, $Y_i \subseteq Y$. We have $E_i = \{ (u,v): u\in X_i, v\in Y_i\}$ and
\begin{align}\label{concl} 
\sum_{i=1}^q \left(|X_i| + |Y_i| \right)\le \frac{ 3 (r+1)h }{\ln \max\{r+1, h\} }. 
\end{align}

Now, we connect this biclique decomposition to the sum of dilates $\lambda_1 \cdot A + \ldots + \lambda_h \cdot A$.
Since the elements of $X$ and $Y$ are integers, we can perform arithmetic operations with them.  For $j=1, \ldots , q$, let 
\[ \gamma_j := \sum_{y \in Y_j} y, \]
and since $\mathcal{H}$ is a biclique decomposition, for $i = 1, \ldots , h$,  we have
\[ \lambda_i = \epsilon_i \sum_{j: \lambda_i \in X_j} \gamma_j .\]
Applying the above to each $\lambda_i$ along with the fact that $B + (\alpha + \beta) \cdot A \subseteq B + \alpha \cdot A + \beta \cdot A$ results in
\begin{align}\label{reversesummation} 
\lambda_1 \cdot A + \ldots + \lambda_h \cdot A \subseteq \epsilon_1 \sum_{j: \lambda_1 \in X_j} (\gamma_j \cdot A) + \ldots + \epsilon_h \sum_{j: \lambda_h \in X_j} (\gamma_j \cdot A).
\end{align}
 Let $k_j := | \{ \lambda_i \in X_j : \lambda_i > 0 \}|$, $\ell_j := |\{ \lambda_i \in X_j: \lambda_i < 0\}|$, and note that $k_j + \ell_j = |X_j|$.  By regrouping the terms in \eqref{reversesummation}, we have 
\begin{align*}
& \hspace{5px}\epsilon_1 \sum_{j: \lambda_1 \in X_j} \gamma_j \cdot A + \ldots + \epsilon_h \sum_{j: \lambda_h \in X_j} \gamma_j \cdot A 
\\ = & \hspace{5px} k_1 ( \gamma_1 \cdot A ) - \ell_1 (\gamma_1 \cdot A) + \ldots + k_q ( \gamma_q \cdot A) - \ell_q (\gamma_q \cdot A) .
\end{align*}
Since $|\gamma_j \cdot A + \gamma_j \cdot A| = |A+A| \le K|A| = K| \gamma_j \cdot A |$, we can apply Proposition~\ref{easyobsgen} to conclude that
\begin{align}\label{stp1} 
\nonumber & \hspace{5px}|k_1 ( \gamma_1 \cdot A ) - \ell_1 (\gamma_1 \cdot A) + \ldots + k_q ( \gamma_q \cdot A) - \ell_q (\gamma_q \cdot A)| \\ 
\le & \hspace{5px}|\gamma_1 \cdot A + \ldots + \gamma_q \cdot A| \cdot K^{q + \sum_{i=1}^q k_i + \ell_i}
\le |\gamma_1 \cdot A + \ldots + \gamma_q \cdot A| \cdot K^{2\sum_{i=1}^q |X_i|}.
\end{align}
For $1\le i\le q$ and $0\le j\le r$, let $\gamma_{i,j} = 1$ if $2^j$ is in the binary expansion of $\gamma_i$ and $0$ otherwise.  Observe that 
\[ \max_j \lfloor \log_2 \gamma_j \rfloor \le \max_i \lfloor \log_2 |\lambda_i| \rfloor = r \text{\hspace{5px} and \hspace{5px}} \sum_{j=0}^r \gamma_{i,j} = |Y_i|. \]
Hence, by Theorem \ref{binbound},
\begin{align}\label{stp2} |\gamma_1 \cdot A + \ldots + \gamma_q \cdot A| \le K^{7 + 10r + 2 \sum_{i=1}^q \sum_{j=0}^r \gamma_{i,j}} |A| =  K^{7 + 10r + 2 \sum_{i=1}^q |Y_i|} |A|. 
\end{align}
Combining \eqref{stp1} and \eqref{stp2} with \eqref{concl} results in
\[ | \lambda_1 \cdot A + \ldots + \lambda_h \cdot A| \le K^{7 + 10r + 2 \sum_{i=1}^q (|X_i|+|Y_i|)} |A| \le K^{7 + 10r + \frac{6(r+1)h}{\ln \max\{ r+1, h\} } } |A|.
 \]
We have $7+10r = o( (r+1)h/ \ln \max\{ r+1, h \} )$ because of \eqref{eq:mn} and the assumption that 
$r, h$ are sufficiently large.
Together with 
\[
\ln \max\{r+1, h\}\ge \ln \frac{r+1+h}2  \ge (1- o(1)) \ln (r+h), 
\]
this implies that
$ | \lambda_1 \cdot A + \ldots + \lambda_h \cdot A| \le K^{ 7rh / \ln (r+h) } |A|$, as desired.
\end{proof}


\section{Concluding Remarks}
Instead of Theorem~\ref{ces}, in an earlier version of the paper we applied a result of Chung, Erd\H{o}s, and Spencer \cite{chuerdspe83}, which states that every graph on $n$ vertices has a biclique decomposition $H_1, \ldots , H_q$ such that $\sum_{i=1}^q |V(H_i)| \le (1+o(1)) n^2/ (2\ln n)$.  Instead of \eqref{eq:main}, we obtained that 
\[
|\lambda_1 \cdot A + \ldots + \lambda_h \cdot A | \le K^{O((r+h)^2/\ln (r+h) ) } |A|.
\] 
This bound is equivalent to \eqref{eq:main} when $r=\Theta(h)$ but weaker than Bukh's bound \eqref{logbound}
when $r$ and $h$ are not close to each other. 

Although the assumption \eqref{eq:mn} may not be optimal, Theorem~\ref{mainthm} is not true without any assumption on $r$ and $h$. For example, when $r$ is large and $h=o(\ln r)$, \eqref{eq:main} becomes $|\lambda_1 \cdot A + \ldots + \lambda_h \cdot A | \le K^{ o(r ) } |A|$, which is false when $A= \{1, \dots, n\}$.

If each of $\lambda_1, \dots, \lambda_r$ has $O(1)$ digits in its binary expansion, then Theorem~\ref{buk08} yields that
$|\lambda_1 \cdot A + \ldots + \lambda_r \cdot A| \le K^{O(r)}|A|$. Bukh asked if this bound holds whenever $\lambda_i \le 2^r$: 
\begin{question}[Bukh \cite{buk08}]\label{bukhconj}
For any set $A$ and for any $\lambda_1 , \ldots , \lambda_r \in \mathbb{Z} \setminus \{0\}$, if $|A+A| = K|A|$ and $0 < \lambda_i \le 2^r$, then 
\[ | \lambda_1 \cdot A + \ldots + \lambda_r \cdot A| \le K^{O(r)}|A|. \]
\end{question}  
\noindent In light of Question~\ref{bukhconj}, one can view Theorem~\ref{mainthm} as providing modest progress by proving a subquadratic bound of quality $O(r^2/\ln r)$ whereas Theorem~\ref{buk08} shows that the exponent is $O(r^2)$.

Generalized arithmetic progressions give supporting evidence for Question~\ref{bukhconj}.  A \emph{generalized arithmetic progression} $P$ is a set of the form
\[ P := \{ d + x_1 d_1 + \ldots + x_k d_k: 0 \le x_i < L_i \}. \]
Moreover, $P$ is said to be \emph{proper} if $|P| = L_1 \cdot \ldots \cdot L_k$.  One can calculate that if $P$ is proper, then
\[ |P+P| \le (2L_i - 1)^k \le 2^k |P| =: K |P|. \]
Additionally, one can calculate that for any $\lambda_1, \ldots , \lambda_h \in \mathbb{Z}^+$, if $\lambda_i \le 2^r$ then
\[ |\lambda_1 \cdot P + \ldots + \lambda_h \cdot P| \le (\lambda_1 + \ldots + \lambda_h)^k |P| = 2^{k \log_2(\lambda_1 + \ldots + \lambda_h)}|P| = K^{r + \log_2{h}}|P|. \] 
Freiman's theorem \cite{fre73} says that, roughly speaking, sets with small doubling are contained in generalized arithmetic progressions with bounded dimension.  Using this line of reasoning, Schoen and Shkredov \cite[Theorem 6.2]{schshk16} proved that 
\[  |\lambda_1 \cdot A + \ldots + \lambda_h \cdot A | \le e^{O( \log_2^6 (2K) \log_2\log_2 (4K)) (h + \log_2 \sum_i |\lambda_i| ) }|A|. \]
This naturally leads us to ask a more precise version of Question~\ref{bukhconj}.
\begin{question}
If $|A+A| = K|A|$, then is
\[ |\lambda_1 \cdot A + \ldots + \lambda_h \cdot A| \le K^{O(h+ \ln \sum_i |\lambda_i|  ) } |A| ?\]
\end{question}

\section*{Acknowledgment}
We thank Boris Bukh and Giorgis Petridis for reading an earlier version of this paper and giving valuable comments. 
We also thank an anonymous referee for suggesting \cite{Tuza84} as well as other helpful comments.

\end{document}